\newcommand{\F}{\mathcal{F}}
\newcommand{\B}{\mathcal{B}}
\newcommand{\E}{{\rm E}}
\renewcommand{\Pr}{{\rm Pr}}
\newtheorem{lemma}{Lemma}
\newtheorem{theorem}{Theorem}
\newtheorem{corollary}{Corollary}
\newtheorem{definition}{Definition}
\newcommand{\st}{\colon\,}
\newcommand{\ceil}[1]{\lceil #1 \rceil}
\def\tsupn{2^{[n]}}
\title{Boolean algebras and Lubell functions}
\author{
Travis Johnston \thanks{University of South Carolina, Columbia, SC 29208,
({\tt johnstjt@mailbox.sc.edu}).} \and 
Linyuan Lu
\thanks{University of South Carolina, Columbia, SC 29208,
({\tt lu@math.sc.edu}). This author was supported in part by NSF
grant  DMS 1000475. }
  \and Kevin G. Milans
\thanks{West Virginia University, Morgantown, WV 26506,
({\tt milans@math.wvu.edu}). } }
\begin{document}
\maketitle

\begin{abstract}
Let $2^{[n]}$ denote the power set of $[n]:=\{1,2,\ldots, n\}$. 
A collection $\B\subset 2^{[n]}$ forms 
a $d$-dimensional {\em Boolean algebra} if there exist pairwise disjoint sets $X_0, X_1,\ldots, X_d \subseteq [n]$,
all non-empty with perhaps the exception of $X_0$,
so that $\B=\left\{X_0\cup \bigcup_{i\in I} X_i\colon I\subseteq [d]\right\}$. 
Let $b(n,d)$  be the maximum cardinality of a family $\F\subset 2^X$
that does not contain a $d$-dimensional Boolean algebra.
Gunderson, R\"odl, and Sidorenko proved that $b(n,d) \leq c_d n^{-1/2^d} \cdot 2^n$ where $c_d= 10^d 2^{-2^{1-d}}d^{d-2^{-d}}$.

In this paper, we use the Lubell function as a new measurement for large families instead of cardinality.
The Lubell value of a family of sets $\F$ with $\F\subseteq \tsupn$ is defined by $h_n(\F):=\sum_{F\in \F}1/{{n\choose |F|}}$.
We prove the following Tur\'an type theorem.
If $\F\subseteq 2^{[n]}$ contains no $d$-dimensional Boolean algebra, then $h_n(\F)\leq 2(n+1)^{1-2^{1-d}}$ for sufficiently large $n$.
This results implies $b(n,d) \leq C n^{-1/2^d} \cdot 2^n$, where $C$ is an absolute constant independent of $n$ and $d$.
As a consequence, we improve several Ramsey-type bounds on Boolean algebras.
We also prove a canonical Ramsey theorem for Boolean algebras.
\end{abstract}

\section{History}
Given a ground set $[n]:=\{1,2,\ldots, n\}$, let $2^{[n]}$ denote the power set of $[n]$.
\begin{definition}
A collection $\B\subseteq 2^{[n]}$ forms a $d$-dimensional {\em Boolean algebra} if there exist pairwise disjoint sets
$X_0, X_1, \ldots,X_d \subseteq [n]$, all non-empty with perhaps the exception of $X_0$, so that 
\[\B=\left\{X_0\cup \bigcup_{i\in I} X_i\colon I\subseteq [d]\right\}.\]  
We view all $d$-dimensional Boolean algebras as copies of a single structure $\B_d$.  Thus, a $d$-dimensional Boolean algebra \emph{forms a copy of $\B_d$}, and a family $\F\subseteq \tsupn$ is \emph{$\B_d$-free} if it does not contain a copy of $\B_d$.
\end{definition}
The starting point of this paper is to explore the question of how large a family of sets can be if it does not contain a $d$-dimensional Boolean algebra.
The first result in this area is due to Sperner.
The simplest example of a non-trivial Boolean algebra, $\B_1$, is a pair of sets, one properly contained in the other.
Sperner's theorem can be restated as follows.
If $\mathcal{F}\subseteq 2^{[n]}$ is $\B_1$-free, then $|\mathcal{F}|\leq \binom{n}{\lfloor \frac{n}{2}\rfloor}$.
Erd\H{o}s and Kleitman~\cite{EK} considered the problem of determining the maximum size of a $\B_2$-free family in $\tsupn$.
General extremal problems on Boolean algebras of sets were most recently studied by Gunderson, R\"odl, and Sidorenko in~\cite{GRS}.

Given an $n$-element set $X$ and a positive integer $d$, define $b(n,d)$ to be the maximum cardinality of a $\B_d$-free family contained in $\tsupn$.  In \cite{GRS}, the following bounds on $b(n,d)$ are proved:
\begin{equation}
  \label{eq:1}
  n^{-\frac{(1+o(1))d}{2^{d+1}-2}}\cdot 2^n \leq b(n,d) \leq 10^d 2^{-2^{1-d}}d^{d-2^{-d}} n^{-1/2^d} \cdot 2^n.
\end{equation}
In the lower bound, the $o(1)$ term represents a function that tends to $0$ as $n$ grows for each fixed $d$.  The lower bound is obtained by considering the affine cubes of integers.

\begin{definition}
A set $H$ of integers is called a $d$-dimensional affine cube or an affine $d$-cube if there exist $d+1$ integers $x_0\geq 0$, and $x_1,\ldots, x_d\geq 1$, such that
\[H=\left\{ x_0+ \sum_{i\in I}x_i\colon I\subseteq [d] \right\}.\]
A set of non-negative integers is \emph{$B_d$-free} if it contains no affine $d$-cube.
\end{definition}

Let $b'(n,d)$ be the maximum size of a $B_d$-free subset of $\{0,\ldots,n\}$.  In \cite{GR}, for sufficiently large $n$, the following bounds on $b'(n,d)$ were proved:
\begin{equation}
  \label{eq:2}
 n^{1-\frac{(1+o(1))d}{2^{d+1}-2}} \leq b'(n,d) \leq  2 (n+1)^{1-2^{1-d}}.
\end{equation}
If $F\subseteq \{0,\ldots, n\}$ and $\F = \{A\in \tsupn\st |A|\in F\}$, then $F$ contains an affine $d$-cube if and only if $\F$ contains a $d$-dimensional Boolean algebra.  Hence, constructions that yield lower bounds on $b'(n,d)$ also yield lower bounds on $b(n,d)$.  Similarly, upper bounds on $b(d,n)$ translate to upper bounds on $b'(d,n)$.  The connection between large $\B_d$-free families in $\tsupn$ and large $B_d$-free families in $\{0,\ldots,n\}$ is greatly simplified when use the Lubell function to measure set families contained in $\tsupn$.

\begin{definition}
Given a family $\F\subseteq 2^{[n]}$, we define the \emph{Lubell function $h_{n}(\F)$} as follows:
\[h_{n}(\F):=\sum_{F\in \F}\frac{1}{\binom{n}{|F|}}.\]
\end{definition}
With this definition in mind, we see that
\begin{equation}\label{eq:b'nd}
b'(n,d) \leq \max \{h_{n}(\F): \F \text{ is } \B_{d}\text{-free}\}
\end{equation}
The Lubell function has been used in the study of extremal families of sets forbidding given subposets (see \cite{diamondfree}).
The Lubell function also has a convenient probabilistic interpretation.
Suppose that $\mathcal{C}$ is a random full-chain in $2^{[n]}$, i.e. $\mathcal{C}=\{\emptyset, \{i_1\}, \{i_1,i_2\},...,[n]\}$.
Let $X$ be the random variable $X=|\mathcal{C}\cap \mathcal{F}|$.
Then we have that $E(X)=h_n(\F)$.

The proof of the upper bound on $b(n,d)$ in inequality~\eqref{eq:1} relies on the Tur\'an density of the $d$-uniform $d$-partite hypergraph in which each part has size $2$.  This, in turn, results in a large multiplicative factor in inequality~\eqref{eq:1} that is asymptotic to $(10d)^d$.  In this paper, using the Lubell function, we improve the upper bound.

\begin{theorem}\label{t:bnd}
There is a positive constant $C$, independent of $d$, such that for every $d$ and all sufficiently large $n$,
the following is true.
\begin{equation}
  \label{eq:bnd}
  b(n,d)\leq C  n^{-1/2^d} \cdot 2^n.
\end{equation}
\end{theorem}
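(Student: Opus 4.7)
The plan is to prove Theorem~\ref{t:bnd} in two stages: first, to establish the Lubell-type bound $h_n(\F)\leq 2(n+1)^{1-2^{1-d}}$ promised in the abstract for every $\B_d$-free $\F\subseteq 2^{[n]}$, and then to derive the cardinality bound from it. Writing $L_d:=2(n+1)^{1-2^{1-d}}$, a direct computation yields the recursion $L_d^2 = 2(n+1)\,L_{d-1}$, which strongly suggests an induction on $d$ driven by Cauchy--Schwarz.

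For the Lubell bound, the base case $d=1$ is immediate: a $\B_1$-free family is an antichain, so LYM gives $h_n(\F)\leq 1\leq L_1$. For the inductive step, I would exploit the decomposition $\B_d=\B_1\times\B_{d-1}$: a $d$-dimensional Boolean algebra on base $X_0$ with atoms $X_1,\dots,X_d$ consists of two parallel copies of $\B_{d-1}$, one based at $X_0$ and the other at $X_0\cup X_1$, sharing the same free atoms $X_2,\dots,X_d$. Thus if $\F$ is $\B_d$-free, then for every pair $A\subsetneq B$ with $A,B\in\F$, the two ``link'' families consisting of the $\B_{d-1}$-patterns based respectively at $A$ and at $B$ (using atoms disjoint from $B$) cannot share a common pattern. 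Applying $\E[X]^2\leq\E[X^2]$ to $X=|\mathcal{C}\cap\F|$ for a uniformly random maximal chain $\mathcal{C}$, expanding $\E[X^2]$ as a sum over comparable pairs in $\F$ weighted by their chain probabilities, and invoking the inductive hypothesis on each induced link family, should produce $h_n(\F)^2\leq 2(n+1)\,L_{d-1}=L_d^2$.

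For the cardinality bound, the Lubell bound alone is insufficient: any LP-style conversion from $h_n(\F)\leq L_d$ together with the trivial per-level constraint $|\F\cap\binom{[n]}{k}|\leq\binom{n}{k}$ degenerates to the trivial bound $|\F|\leq 2^n$ once $d\geq 2$, since then $L_d\gtrsim\sqrt{n}$. To circumvent this, I would strengthen the induction in Stage~1 to also carry a per-level density bound: for every $\B_d$-free $\F$ and every level $k$ with $|k-n/2|\leq t\sqrt{n\log n}$, one has $|\F\cap\binom{[n]}{k}|\leq C\,n^{-1/2^d}\binom{n}{k}$. Splitting $\F=\F_{\rm mid}\cup\F_{\rm tail}$ along this window, Chernoff bounds give $|\F_{\rm tail}|\leq\sum_{|k-n/2|>t\sqrt{n\log n}}\binom{n}{k}=O(2^n\cdot n^{-ct^2})$, while the middle contribution satisfies $|\F_{\rm mid}|\leq C\,n^{-1/2^d}\sum_k\binom{n}{k}\leq C\,n^{-1/2^d}\cdot 2^n$, completing the proof.

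The main obstacle is the inductive step for the Lubell bound: precisely defining the link families, verifying that $\B_d$-freeness forces them to be $\B_{d-1}$-free in the appropriate sub-cube, and controlling the bookkeeping so that Cauchy--Schwarz extracts exactly the factor $2(n+1)$. A secondary obstacle is carrying the per-level density bound through the same induction, since without it the Lubell bound cannot be upgraded to the required cardinality bound for $d\geq 2$.
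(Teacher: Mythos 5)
Your Stage~1 is essentially the paper's Theorem~\ref{turan}: a second-moment argument on a uniformly random maximal chain, expanding $\E\binom{X}{2}$ over comparable pairs $(A,B)\in\F\times\F$ and applying induction on $d$ to a derived family. The paper organizes the pairs by the difference set $S=B\setminus A$ and passes to $\F_S=\{A\in\F\st A\cap S=\emptyset,\ A\cup S\in\F\}$, which lives in $2^{[n]\setminus S}$ and must avoid $\B_d$ if $\F$ avoids $\B_{d+1}$; your ``link family'' description is vaguer but is the same mechanism, and the recursion $L_d^2\approx 2(n+1)L_{d-1}$ you identify is exactly the paper's $\binom{\alpha_{d+1}(n)}{2}=n\alpha_d(n)$. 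So far, so good.

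Stage~2 contains a genuine gap: the per-level density bound you propose to carry through the induction, namely $|\F\cap\binom{[n]}{k}|\le C\,n^{-1/2^d}\binom{[n]}{k}$ for levels $k$ near $n/2$, is false. A single full level is an antichain, hence $\B_d$-free for every $d\ge 1$, and it saturates its level completely; worse, a union of full levels whose set of cardinalities avoids affine $d$-cubes is $\B_d$-free and can contain on the order of $n^{1-(1+o(1))d/(2^{d+1}-2)}$ entirely full levels. No per-level statement of the kind you want can hold, so this strengthened induction cannot be set up. The idea you are missing is \emph{localization to intervals}: for $A\subseteq B$ the interval $[A,B]=\{Y\st A\subseteq Y\subseteq B\}$ is itself a copy of the $(b-a)$-dimensional Boolean lattice, so Theorem~\ref{turan} applies to $\F\cap[A,B]$ inside that sublattice and gives $h_{b-a}(\F\cap[A,B])\le\alpha_d(b-a)$. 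Averaging over all intervals with $|A|=a$, $|B|=b$ shows that the Lubell mass of $\F$ restricted to the \emph{band} of levels between $a$ and $b$ is at most $\alpha_d(b-a)$ --- a bound on a block of consecutive levels, not on a single level. Taking bands of width $\ell=\ceil{\sqrt{n/2}}$, each band contributes at most $\alpha_d(\ell)\approx(4\ell)^{1-2^{1-d}}\approx n^{1/2-1/2^d}$ Lubell mass, hence at most that many times the largest binomial coefficient in the band; summing over bands with the Gaussian decay of $\binom{n}{k}$ (the paper's Lemma~\ref{l:bin}) then yields $|\F|\le C\,n^{-1/2^d}2^n$ with no need for $\log n$ windows or Chernoff tail estimates.
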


The following theorem uses the Lubell function as the measurement of large families; it implies Theorem \ref{t:bnd}.  The proof may be viewed as an extension of Szemer\'edi's cube lemma (\cite{SCL}; see also problem 14.12 in \cite{LCP}) for $B_d$-free subsets of $\{0,\ldots,n\}$ to $\B_d$-free families contained in $\tsupn$. 
\begin{theorem}\label{turan}
For $d\geq 1$, define $\alpha_d(n)$ recursively as follows.
Let $\alpha_1(n):=1$ and $\alpha_d(n)=\frac{1}{2}+ \sqrt{2n \alpha_{d-1}(n)+\frac{1}{4}}$ for $d\geq 2$.
For $n\geq d\geq 1$ if a family $\F\subseteq 2^{[n]}$ satisfies $h_n(\F)> \alpha_d(n)$, then $\F$ contains a $d$-dimensional Boolean algebra.
\end{theorem}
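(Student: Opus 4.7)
The plan is to proceed by induction on $d$. The base case $d=1$ reduces to the LYM inequality: since $\alpha_1(n)=1$, the hypothesis $h_n(\F)>1$ forces $\F$ to fail to be an antichain, so $\F$ contains some comparable pair $A\subsetneq B$, which is a copy of $\B_1$.

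For the inductive step, the key algebraic observation is that, by squaring, the recursion rewrites as
\[
\binom{\alpha_d(n)}{2}=n\,\alpha_{d-1}(n).
\]
Given $\F\subseteq 2^{[n]}$ with $h_n(\F)>\alpha_d(n)$, I would invoke the probabilistic interpretation of the Lubell function: let $\C$ be a uniformly random maximal chain in $2^{[n]}$ and set $X:=|\C\cap\F|$, so $\E[X]=h_n(\F)$. Convexity of $x\mapsto x(x-1)/2$ then gives
\[
\E\!\left[\binom{X}{2}\right]\geq\binom{\E[X]}{2}>\binom{\alpha_d(n)}{2}=n\,\alpha_{d-1}(n).
\]

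Next I would expand $\E[\binom{X}{2}]$ by summing over ordered pairs $A\subsetneq B$ in $\F$ and regrouping by the difference $Y:=B\setminus A$. For each nonempty $Y\subseteq[n]$, define
\[
\F_Y:=\{A\subseteq[n]\setminus Y\st A\in\F\text{ and }A\cup Y\in\F\}.
\]
Using $\Pr[A,B\in\C]=1/\bigl(\binom{n}{|A|}\binom{n-|A|}{|Y|}\bigr)$ together with the symmetry $\binom{n}{|A|}\binom{n-|A|}{|Y|}=\binom{n}{|Y|}\binom{n-|Y|}{|A|}$ yields
\[
\E\!\left[\binom{X}{2}\right]=\sum_{\emptyset\neq Y\subseteq[n]}\frac{h_{n-|Y|}(\F_Y)}{\binom{n}{|Y|}}.
\]
Since $\sum_{\emptyset\neq Y\subseteq[n]}1/\binom{n}{|Y|}=n$, if every $\F_Y$ were $\B_{d-1}$-free then the inductive hypothesis would bound $h_{n-|Y|}(\F_Y)\leq\alpha_{d-1}(n-|Y|)\leq\alpha_{d-1}(n)$, and the displayed sum would be at most $n\,\alpha_{d-1}(n)$, contradicting the lower bound on $\E[\binom{X}{2}]$. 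Hence some $\F_Y$ contains a copy of $\B_{d-1}$ with parts $X_0,X_1,\ldots,X_{d-1}\subseteq[n]\setminus Y$.

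Setting $X_d:=Y$ finishes the proof: the sets $X_0,\ldots,X_d$ are pairwise disjoint with $X_1,\ldots,X_d$ nonempty, and for every $J\subseteq[d]$ the set $X_0\cup\bigcup_{i\in J}X_i$ lies in $\F$---directly if $d\notin J$, and via the $A\cup Y\in\F$ clause in the definition of $\F_Y$ if $d\in J$---giving the desired copy of $\B_d$ in $\F$. The main conceptual obstacle is spotting the matched pair of identities $\binom{\alpha_d}{2}=n\,\alpha_{d-1}$ and $\sum_Y 1/\binom{n}{|Y|}=n$ that make the averaging tight; the only technical nuisance is handling the boundary terms with $n-|Y|<d-1$, where the inductive statement does not formally apply, but where the trivial bound $h_{n-|Y|}(\F_Y)\leq n-|Y|+1$ is easily checked to be at most $\alpha_{d-1}(n-|Y|)$.
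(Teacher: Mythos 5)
Your proposal is correct and follows essentially the same route as the paper: the identity $\binom{\alpha_d(n)}{2}=n\,\alpha_{d-1}(n)$, the convexity bound $\E\binom{X}{2}\geq\binom{\E X}{2}$ over a random maximal chain, and the regrouping of comparable pairs by the difference set $Y=B\setminus A$ into the families $\F_Y$ are exactly the paper's argument, with your contradiction phrasing being the contrapositive of the paper's averaging step. Your explicit handling of the boundary terms with $n-|Y|<d-1$ is a small point the paper glosses over, and it checks out.
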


Note that the sequence  $\{\alpha_d(n)\}_{d\geq 1}$ satisfies
\begin{equation}
  \label{eq:recursive}
  {\alpha_{d+1}(n) \choose 2} = n \alpha_d(n) \quad
\mbox{ for } d\geq 1. 
\end{equation}
We have the following bounds for $n\ge 1$ (see Appendix for the proof).
\begin{equation}
  \label{eq:trivial}
  (2n)^{1-2^{1-d}}\leq \alpha_d(n) \leq (4n)^{1-2^{1-d}}.   
\end{equation}
when $n\geq d\geq 1$.

The function $\alpha_d(n)$ is used in \cite{GR} implicitly.
Note that for any fixed $d\geq 2$, $\alpha_d(n)$ is an increasing function of $n$.
We have $\alpha_1(n)=1$, $\alpha_2(n)=\frac{1}{2}+ \sqrt{2n +\frac{1}{4}}$.
For $d\geq 3$, it was implicitly shown in \cite{GR} that 
\[\alpha_d(n) \leq 2^{1-2^{1-d}}(\sqrt{n+1}+1)^{2-2^{2-d}} \quad \mbox{ for } n+1\geq 2^{d2^{d-1}/(2^{d-1}-1)}\]
and 
\[\alpha_d(n) \leq 2 (n+1)^{1-2^{1-d}} \quad \mbox{ for } n+1\geq (2^d-2/\ln 2)^2.\]

The following is a corollary which can be viewed as the generalization of inequality \eqref{eq:2} and \eqref{eq:b'nd}.

\begin{corollary}
For $d\geq 3$ and $n\geq  (2^d-2/\ln 2)^2$,
every family $\F\subseteq \tsupn$ containing no $d$-dimensional Boolean algebra
satisfies  $h_n(\F)\leq  2 (n+1)^{1-2^{1-d}}$.
\end{corollary}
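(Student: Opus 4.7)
The plan is to combine Theorem~\ref{turan} with the already-stated estimate $\alpha_d(n)\le 2(n+1)^{1-2^{1-d}}$, valid for $n+1\ge (2^d-2/\ln 2)^2$, so the proof reduces to checking hypotheses and chaining two inequalities.

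First I would take the contrapositive of Theorem~\ref{turan}: any $\B_d$-free family $\F\subseteq 2^{[n]}$ must satisfy $h_n(\F)\le \alpha_d(n)$, provided that $n\ge d$. Under the standing hypothesis $n\ge (2^d-2/\ln 2)^2$ with $d\ge 3$, the condition $n\ge d$ is trivial (already $(2^d-2/\ln 2)^2\ge 25$ when $d=3$), so Theorem~\ref{turan} applies and yields $h_n(\F)\le \alpha_d(n)$. Second, I would observe that $n\ge (2^d-2/\ln 2)^2$ forces $n+1>(2^d-2/\ln 2)^2$, so the estimate $\alpha_d(n)\le 2(n+1)^{1-2^{1-d}}$ quoted from~\cite{GR} is applicable. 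Chaining the two inequalities gives $h_n(\F)\le 2(n+1)^{1-2^{1-d}}$, which is the conclusion.

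If a self-contained derivation of the bound on $\alpha_d(n)$ is desired rather than a citation, I would induct on $d$ using the recursion $\alpha_{d+1}(n)=\tfrac12+\sqrt{2n\alpha_d(n)+\tfrac14}$ from~\eqref{eq:recursive}, with base case $d=2$ (where $\alpha_2(n)=\tfrac12+\sqrt{2n+\tfrac14}\le 2\sqrt{n+1}$ is elementary). For the inductive step, assuming $\alpha_d(n)\le 2(n+1)^{1-2^{1-d}}$, one obtains
\[
\alpha_{d+1}(n)\le \tfrac12+\sqrt{4n(n+1)^{1-2^{1-d}}+\tfrac14},
\]
and the task is to show this is at most $2(n+1)^{1-2^{-d}}$. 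The expected main obstacle is precisely this quantitative step: after writing $\beta=1-2^{1-d}$, the desired inequality reduces to a one-variable calculus estimate asserting $2(n+1)^{(1+\beta)/2}-\sqrt{4n(n+1)^\beta+\tfrac14}\ge \tfrac12$, whose truth requires $n$ to be large enough in terms of $d$; the explicit threshold $n+1\ge (2^{d+1}-2/\ln 2)^2$ is what calibrates the constant. Everything else in the corollary is a direct application of Theorem~\ref{turan}.
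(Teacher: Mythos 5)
Your proposal is correct and is exactly the argument the paper intends: the corollary is stated immediately after the bound $\alpha_d(n)\le 2(n+1)^{1-2^{1-d}}$ for $n+1\ge (2^d-2/\ln 2)^2$ quoted from~\cite{GR}, and it follows by chaining that bound with the contrapositive of Theorem~\ref{turan}, whose hypothesis $n\ge d$ you correctly verify is implied by the stated threshold. The optional self-contained induction you sketch is not needed (the paper simply cites~\cite{GR} for the $\alpha_d$ estimate), but your main route matches the paper's.
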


The rest of the paper is organized as follows. In section 2, we prove
Theorem  \ref{t:bnd} and Theorem \ref{turan}. In section 3, we prove several
Ramsey-type results.

\section{Proofs of Theorems \ref{t:bnd} and \ref{turan}}

{\bf Proof of Theorem \ref{turan}:} 

The proof is by induction on $d$.
For the initial case $d=1$, we have $h_n(\F)> \alpha_1(n)=1$.
Let $X$ be the number of sets in both $\F$ and a random full chain. 
Then $\E(X)=h_n(\F)>1$. 
There is an instance of $X$ satisfying $X\geq 2$.
Let $A$ and $B$ be two sets in both $\F$ and a full chain.
Clearly, the pair $\{A, B\}$ forms a copy of $\B_1$.

Assume that the statement is true for $d$.
For $d+1$, suppose $\F\subseteq \tsupn$ satisfies $h(\F)> \alpha_{d+1}(n)$.
Let $X$ be the number of sets in both $\F$ and a random full chain.
By the convex inequality, we have
\begin{align*}
	\E{X\choose 2} &\geq \binom{\E X}{2} \\
                       &> \binom{\alpha_{d+1}(n)}{2} \\
                       &= n \alpha_d(n).
\end{align*}
For each subset $S$ of $[n]$, let $\F_S = \{A\in \F\st\mbox{$A\cap S = \emptyset$ and $A\cup S\in\F$}\}$.  We show that for some non-empty set $S$, the Lubell function of $\F_S$ in $2^{[n]\setminus S}$ exceeds $\alpha_d(n-|S|)$.  It follows by induction that $\F_S$ contains a copy of $\B_d$ generated by some sets $S_0,S_1,\ldots,S_d$, and with $S$ these sets generate a copy of $\B_{d+1}$ in $\F$.  Let $Z = \{(A,B)\in \F\times\F\st \mbox{$A\subsetneq B$}\}$.  For each $(A,B)\in Z$, the probability that a random full-chain in $\tsupn$ contains both $A$ and $B$ is $1/\binom{n}{|A|,|B|-|A|,n-|B|}$.  We compute
\begin{align*}
	\E{X\choose 2} &= \sum_{(A,B)\in Z}\frac{1}{\binom{n}{|A|,|B|-|A|,n-|B|}}\\
		       &= \sum_{\emptyset \subsetneq S \subseteq [n]} \sum_{A\in \F_S} \frac{1}{\binom{n}{|A|,|S|,n-|A|-|S|}}\\
		       &= \sum_{\emptyset \subsetneq S \subseteq [n]} \frac{1}{\binom{n}{|S|}} \sum_{A\in \F_S} \frac{1}{\binom{n-|S|}{|A|}}\\
		       &= \sum_{\emptyset \subsetneq S \subseteq [n]} \frac{1}{\binom{n}{|S|}} h_{n-|S|}(\F_S)\\
		       &= \sum_{k=1}^n \frac{1}{\binom{n}{k}} \sum_{S \in \binom{[n]}{k}}  h_{n-k}(\F_S).\\
\end{align*}
Since $\E{\binom{X}{2}} > n\alpha_d(n)$, it follows that $\frac{1}{\binom{n}{k}} \sum_{S \in \binom{[n]}{k}}  h_{n-k}(\F_S) > \alpha_d(n)$ holds for some $k$.  In turn, $h_{n-k}(\F_S) > \alpha_d(n) \ge \alpha_d(n-k)$ for some set $S$ of size $k$.
\hfill $\square$

Before proving Theorem \ref{t:bnd}, we need bounds on ratios of binomial coefficients.
\begin{lemma}\label{l:bin}
If $k \le n$, then $\binom{2n}{k}/\binom{2n}{n} \le e^{-\frac{2}{n}\binom{n-k}{2}}$.
\end{lemma}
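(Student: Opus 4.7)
The plan is to rewrite the ratio as a product of consecutive binomial ratios and then apply a sharp logarithmic inequality. Setting $m := n - k \ge 0$ and telescoping the identity $\binom{2n}{j+1}/\binom{2n}{j} = (2n-j)/(j+1)$ from $j = k$ up to $j = n-1$ yields
\[
\frac{\binom{2n}{n}}{\binom{2n}{k}} \;=\; \prod_{i=1}^{m} \frac{n+i}{n+1-i}.
\]
Taking logarithms, the claim is equivalent to
\[
\sum_{i=1}^{m} \log\frac{n+i}{n+1-i} \;\ge\; \frac{m(m-1)}{n}.
\]

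The key step will be a Pad\'e-type sharpening of the logarithm: for $a \ge b > 0$,
\[
\log\frac{a}{b} \;\ge\; \frac{2(a-b)}{a+b}.
\]
I would prove this by checking that $g(x) := \log(1+x) - 2x/(2+x)$ satisfies $g(0) = 0$ and $g'(x) = x^2 / \bigl((1+x)(2+x)^2\bigr) \ge 0$. Setting $a = n+i$ and $b = n+1-i$, I get $a - b = 2i-1$ while $a + b = 2n+1$ is \emph{independent of $i$}, so each term is bounded below by $2(2i-1)/(2n+1)$. Summing and using $\sum_{i=1}^m (2i-1) = m^2$ produces the lower bound $2m^2/(2n+1)$. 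It then remains to check $2m^2/(2n+1) \ge m(m-1)/n$; after dividing by $m$ (the case $m = 0$ being trivial) and cross-multiplying, this becomes $2mn \ge (m-1)(2n+1)$, i.e.\ $m \le 2n+1$, which holds since $m \le n$.

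The main obstacle is selecting a sharp enough logarithmic inequality. The familiar bound $\log(1-x) \le -x$ applied to the equivalent form $\prod(1 - m/(n+i))$ is just barely too weak, as a small numerical check (e.g.\ $n = 10$, $k = 5$) shows: it misses the required $m(m-1)/n$. The symmetric Pad\'e form $\log(a/b) \ge 2(a-b)/(a+b)$ is the natural replacement precisely because the denominator $a + b = 2n+1$ is constant across the product, which collapses the $i$-dependence into the clean arithmetic sum $1 + 3 + \cdots + (2m-1) = m^2$. Once this observation is made, the rest is routine.
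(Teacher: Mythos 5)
Your proof is correct and follows essentially the same route as the paper's: both expand the ratio as the same telescoping product of $n-k$ linear factors and bound each factor by the inequality $\log(a/b) \ge 2(a-b)/(a+b)$, which is exactly the paper's $(1-x)/(1+x) \le e^{-2x}$ in the special case $a = 1+x$, $b = 1-x$. The only difference is cosmetic: the paper first weakens each denominator from $n+j+1$ to $n+j$ so that the factors become symmetric, whereas you keep the exact factors (exploiting the constant sum $a+b = 2n+1$) and absorb the asymmetry into the final arithmetic check $2m^2/(2n+1) \ge m(m-1)/n$.
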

\begin{proof}
Note that $\binom{2n}{k}/\binom{2n}{n} = \frac{n!\cdot n!}{k!(2n-k)!} = \prod_{j=0}^{n-k-1} \frac{n-j}{n+j+1} \le \prod_{j=0}^{n-k-1} \frac{n-j}{n+j}$.  Next, we apply the inequality $(1-x)/(1+x)\le e^{-2x}$ for $x\ge 0$ to find $\binom{2n}{k}/\binom{2n}{n} \le e^{-\frac{2}{n}\sum_{j=0}^{n-k-1} j} = e^{-\frac{2}{n}\binom{n-k}{2}}$.
\end{proof}

{\bf Proof of Theorem \ref{t:bnd}: }
Let $\F\subseteq 2^{[n]}$ be a $\B_d$-free family.  For $0\le a \le b\le n$, let $\F(a,b) = \{A\in\F\st a\le |A|\le b\}$.  For two sets $A$ and $B$ with $A\subseteq B$, the \emph{interval} $[A,B]$ is the set $\{X\in \tsupn\st A\subseteq X\subseteq B\}$.  Let $Z = \{(A,B)\st \mbox{$A\subseteq B$, $|A| = a$, and $|B| = b$}\}$.  Since $\F$ is $\B_d$-free and $[A,B]$ is a copy of the $(b-a)$-dimensional Boolean algebra, Theorem~\ref{turan} implies that $h_{b-a}(\F\cap[A,B]) \le \alpha_d(b-a)$ for each $(A,B)\in Z$.  Since a random chain is equally likely to intersect levels $a$ and $b$ at all pairs in $Z$, it follows that $h_n(\F(a,b))$ is the average, over all $(A,B) \in Z$, of $h_{b-a}(\F\cap[A,B])$.  Therefore $h_n(\F(a,b)) \le \alpha_d(b-a)$.

We may assume without loss of generality that $n$ is an even integer $2m$, and let $\ell = \ceil{\sqrt{m}}$.  We first bound the number of sets in $\F$ whose size is at most $m$; to do this, we partition $\{A\in\F\st |A|\le m\}$ into subsets of the form $\F(a,b)$ where $b-a$ is at most $\ell$.   Let $t$ be the largest integer such that $m - t\ell - 1 \ge 0$.  We define $x_0,\ldots,x_{t+1}$ by setting $x_0 = m$, $x_j = m - j\ell - 1$ for $1\le j\le t$, and $x_{t+1} = -1$.  For $0 \le j \le t$, we define $\F_j = \F(x_{j+1} + 1, x_j)$, and note that $x_j - (x_{j+1} + 1) \le \ell$ for all $j$.  Hence $h_n(\F_j) \le \alpha_d(\ell)$ for all $j$.  Since $h_n(\F_j) \ge |\F_j|/\binom{2m}{x_j}$, it follows that $|\F_j| \le \alpha_d(\ell)\binom{2m}{x_j}$.

We compute
\begin{align*}
\sum_{j=0}^t |\F_j| &\le \alpha_d(\ell)\sum_{j=0}^t \binom{2m}{x_j}\\
&\le \alpha_d(\ell) \binom{2m}{m} \sum_{j=0}^t e^{-\frac{2}{m}\binom{m-x_j}{2}}\\
&\le \alpha_d(\ell) \binom{2m}{m} \sum_{j=0}^t e^{-\frac{1}{m}(j\ell)^2}\\
&\le \alpha_d(\ell) \binom{2m}{m} \sum_{j\ge0} e^{-\frac{\ell^2}{m}j}\\
&\le \alpha_d(\ell) \binom{2m}{m} \frac{1}{1-e^{-\ell^2/m}},
\end{align*}
where we have applied Lemma~\ref{l:bin}.  Since $\ell\ge \sqrt{m}$, the series is bounded by the absolute constant $1/(1-e^{-1})$.  Using that $\binom{2m}{m} \le \frac{\sqrt{2}e}{2\pi} \frac{1}{\sqrt{m}} 2^{2m}$ for all $m$ and applying our bound $\alpha_d(\ell) \le (4\ell)^{1-2^{1-d}} \le (4(\sqrt{m} + 1))^{1-2^{1-d}} \le 8(\sqrt{m})^{1-2^{1-d}}$ yields 
\[ \sum_{j=0}^t |\F_j| \le \frac{8\sqrt{2}e^2}{2\pi(e-1)} \cdot m^{-1/2^d} \cdot 2^{2m}. \]
Doubling this, we have that $|\F| \le \frac{8\sqrt{2}e^2}{\pi(e-1)} \cdot m^{-1/2^d} \cdot 2^{2m}$, and substituting $m=n/2$ gives $|\F| \le \frac{16e^2}{\pi(e-1)} \cdot n^{-1/2^d} \cdot 2^n < 22n^{-1/2^d}2^n$.
\hfill $\square$

We note that our constant $22$ can be reduced by sharpening the analysis in the proof of Theorem~\ref{t:bnd} in several places; we make no attempt to further reduce the constant.

\section{Ramsey-type results}

\subsection{Multi-color Ramsey results}
Given positive integers $n$ and $d$, define $r(d,n)$ to be the largest integer
$r$ so that every $r$-coloring of $\tsupn$ contains a monochromatic copy
of $\B_d$.  Gunderson, R\"odl, and Sidorenko~\cite{GRS} proved
for $d>2$,
\begin{equation}
  \label{eq:rdn}
  c n^{1/2^d} \leq r(d,n) \leq n^{\frac{d}{2^d-1}(1+o(1))}.
\end{equation}

Using Theorem \ref{turan}, we improve the lower bound.

\begin{theorem} \label{t:rdn}
  For $d>2$, we have \[r(d,n)\geq \frac{1}{2} (n+1)^{2/2^d}.\]
\end{theorem}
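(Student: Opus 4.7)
The plan is to combine a pigeonhole argument on Lubell values with Theorem~\ref{turan}. The key observation is that $h_n$ is additive on partitions and the total Lubell value of $\tsupn$ is very small:
\[
h_n(\tsupn) \;=\; \sum_{k=0}^n \binom{n}{k}\cdot\frac{1}{\binom{n}{k}} \;=\; n+1.
\]
Hence for any $r$-coloring $\F_1,\ldots,\F_r$ of $\tsupn$, we have $\sum_{i=1}^r h_n(\F_i) = n+1$, so by averaging some color class $\F_i$ satisfies $h_n(\F_i) \geq (n+1)/r$.

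I would then take $r$ to be (an integer close to) $\tfrac{1}{2}(n+1)^{2/2^d}$. Since $1 - 2/2^d = 1 - 2^{1-d}$, this choice gives
\[
h_n(\F_i) \;\geq\; \frac{n+1}{r} \;\geq\; 2(n+1)^{1-2^{1-d}} \;\geq\; \alpha_d(n),
\]
where the last inequality is the bound on $\alpha_d(n)$ recalled in the excerpt, valid once $n+1 \geq (2^d - 2/\ln 2)^2$. By Theorem~\ref{turan}, $\F_i$ then contains a copy of $\B_d$, which is monochromatic in the original coloring, so $r(d,n) \geq \tfrac{1}{2}(n+1)^{2/2^d}$.

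The only technical wrinkle is that Theorem~\ref{turan} requires the strict inequality $h_n(\F_i) > \alpha_d(n)$, whereas the pigeonhole bound above is non-strict. This can be handled either by choosing $r$ one unit below the raw threshold $\tfrac{1}{2}(n+1)^{2/2^d}$ (the $-1$ is absorbed in the rounding), or more cleanly by using the sharper bound $\alpha_d(n) \leq 2^{1-2^{1-d}}(\sqrt{n+1}+1)^{2-2^{2-d}}$ also recalled in the excerpt, which is strictly below $2(n+1)^{1-2^{1-d}}$ for sufficiently large $n$ since $(\sqrt{n+1}+1)^2/(n+1) \to 1$ while the ratio of the leading constants is $2^{2^{1-d}}>1$. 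Beyond this small bookkeeping, the argument is essentially a one-line application of Theorem~\ref{turan} to the largest color class, so I do not anticipate any real obstacle.
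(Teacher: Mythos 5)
Your proposal is correct and follows essentially the same route as the paper: compute $h_n(\tsupn)=n+1$, pigeonhole to find a color class with Lubell value at least $(n+1)/r = 2(n+1)^{1-2^{1-d}} \geq \alpha_d(n)$, and apply Theorem~\ref{turan}. Your remark about the strict versus non-strict inequality is a fair point that the paper itself glosses over, and your proposed fixes are adequate.
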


{\bf Proof of Theorem~\ref{t:rdn}:} Let $r=\frac{1}{2} (n+1)^{2/2^d}$. For every $r$-coloring of $\tsupn$
and $1\leq i \leq r$, let $\F_i$ be the family of sets in color $i$.
By linearity, we have
\[\sum_{i=1}^r h_n(\F_i)=h_n(\tsupn)=n+1.\]
By the pigeon hole principle, there is a color $i$ with $h_n(\F_i)\geq \frac{n+1}{r}=2(n+1)^{1-2^{1-d}}$.
For all $r,d\geq 2$, we have $n+1 \geq (2^d-2/\ln 2)^2.$ 
Thus,
\[h_n(\F_i)\geq \frac{n+1}{r}= 2 (n+1)^{1-2^{1-d}}>\alpha_{d}(n).\]
By Theorem \ref{turan}, $\F_i$ contains a copy of $\B_d$.
\hfill $\square$

For positive integers $t_1, t_2, \ldots, t_r$, let $R(\B_{t_1},\ldots, \B_{t_r})$ be the least integer $N$
such that for any $n\geq N$ and any $r$-coloring of $\tsupn$ there exists an $i$ such that $\B_n$ contains a monochromatic copy of $\B_{t_i}$ in color $i$.  In this language, Theorem~\ref{t:rdn} states that 
\[R(\underbrace{\B_t,\ldots, \B_t}_r) \le (2r)^{2^{t-1}}-1.\]  Next, we establish an exact result for $R(\B_s, \B_1)$.  
Our lower bound on $R(\B_s, \B_1)$ requires a numerical result.  A sequence of positive integers is \emph{complete} if every positive integer is the sum of a subsequence.  In 1961, Brown~\cite{Brown} showed that a non-decreasing sequence $x_1,x_2,\ldots$ of positive integers with $x_1=1$ is complete if and only if $\sum_{i=1}^k x_i \le 1 + x_{k+1}$ for each $k$.  We adapt Brown's argument to obtain a sufficient condition for a finite variant; we include the proof for completeness.

\begin{lemma}[Brown~\cite{Brown}]\label{l:sums}
Let $x_1,\ldots,x_s$ be a list of positive integers with sum at most $2s-1$.  For each $k$ with $0\le k\le s$, there is a sublist with sum $k$.
\end{lemma}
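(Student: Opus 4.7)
The plan is to induct on $s$ after first sorting the list in non-decreasing order, since reordering preserves the multiset of sublist sums. So assume $x_1 \le x_2 \le \cdots \le x_s$. The base case $s=1$ is immediate: positivity and the bound $x_1 \le 1$ force $x_1=1$, and the empty and full sublists realize $0$ and $1$.

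For the inductive step I would extract two arithmetic consequences of the hypothesis. First, since the first $s-1$ terms are each at least $1$, they contribute at least $s-1$ to the sum, so
\[ x_s \le (2s-1) - (s-1) = s. \]
Second, if $x_s \ge 2$, then $\sum_{i<s} x_i \le 2s - 1 - x_s \le 2(s-1) - 1$, which is exactly the hypothesis needed to apply the inductive claim to the shorter list $x_1,\ldots,x_{s-1}$. The remaining possibility $x_s = 1$ forces every $x_i$ to equal $1$, in which case the sublist consisting of any $k$ of the entries realizes each $k$ in $\{0,1,\ldots,s\}$.

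When $x_s \ge 2$, induction supplies sublists of $x_1,\ldots,x_{s-1}$ realizing every value in $\{0,1,\ldots,s-1\}$, and these clearly remain valid sublists of the full list. To realize the remaining value $k=s$, I would adjoin $x_s$ to a sublist of the first $s-1$ entries whose sum is $s-x_s$; such a sublist exists because the bound $x_s \le s$ places $s-x_s$ inside $\{0,1,\ldots,s-1\}$.

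The only subtlety is recognizing that sorting is what makes the recursion close: sorting ensures both that the element we remove respects the inductive sum bound (via $x_s \ge 2$ in the nontrivial case) and that the element we put back is small enough to complete the missing value $s$. No step beyond these two observations should present real difficulty.
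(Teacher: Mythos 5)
Your proposal is correct and is essentially the same argument as the paper's: sort the list, handle the all-ones case separately, otherwise peel off the largest element $x_s\ge 2$ so that the remaining list satisfies the inductive hypothesis, and use $x_s\le s$ to realize the one missing value $s$. The only cosmetic difference is that the paper starts the induction at $s=0$ (vacuously) rather than $s=1$.
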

\begin{proof}
We use induction on $s$.  Since the empty list of numbers has sum $0$ which is larger than $2\cdot 0 - 1$, the lemma holds vacuously when $s=0$.  For $s\ge 1$, index the integers so that $1\le x_1 \le \cdots \le x_s$.  If $x_s = 1$, then $x_j = 1$ for each $j$ and the lemma holds.  Otherwise, $x_s \ge 2$ and $x_1,\ldots,x_{s-1}$ has sum at most $2(s-1)-1$.  By induction, for each $k$ with $0\le k\le s-1$, some sublist of $x_1,\ldots,x_{s-1}$ has sum $k$.  Note that $x_s \le s$, or else $x_s \ge s+1$ and $x_j \ge 1$ for $1\le j\le s-1$ would contradict that the list $x_1,\ldots,x_s$ has sum at most $2s-1$.  Since $s-x_s$ is in the range $\{0,\ldots,s-1\}$, we obtain a sublist with sum $s$ by adding $x_s$ to a sublist of $x_1,\ldots,x_{s-1}$ with sum $s-x_s$.
\end{proof}

\begin{theorem}
 For all $s\geq 1$, we have $R(\B_s,\B_1)=2s$.
\end{theorem}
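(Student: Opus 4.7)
The plan is to prove $R(\B_s,\B_1)=2s$ in two halves: the lower bound via an explicit blocking coloring on $2^{[2s-1]}$ using Brown's Lemma, and the upper bound by induction on $s$, extending a red $\B_{s-1}$ on a sub-lattice to a red $\B_s$.

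For the lower bound $R(\B_s,\B_1)\ge 2s$, I would 2-color $2^{[2s-1]}$ by painting every size-$(s-1)$ set blue and all other sets red. Blue is a single level, hence an antichain, so it contains no $\B_1$. To verify red is $\B_s$-free, let $X_0,X_1,\dots,X_s\subseteq[2s-1]$ be pairwise disjoint with $|X_i|\ge 1$ for $i\ge 1$. From $\sum_{i=1}^s|X_i|\ge s$ and $\sum_{i=0}^s|X_i|\le 2s-1$ we get $|X_0|\le s-1$. Apply Lemma~\ref{l:sums} to the $s$ positive integers $|X_1|,\dots,|X_s|$, whose sum is at most $2s-1$: there is $I\subseteq[s]$ with $\sum_{i\in I}|X_i|=s-1-|X_0|\in\{0,\dots,s-1\}$, producing a $\B_s$-element of size exactly $s-1$, i.e., a blue set.

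For the upper bound $R(\B_s,\B_1)\le 2s$, take a 2-coloring of $2^{[2s]}$ with no blue $\B_1$, so blue is an antichain and by LYM $h_{2s}(\mathrm{red})\ge 2s$. The cases $s=1$ (Sperner on $2^{[2]}$) and $s=2$ (Theorem~\ref{turan} with $\alpha_2(4)<4$) are direct. For $s\ge 3$, I would induct on $s$: restrict to $2^{[2s-2]}$ by deleting the pair $\{2s-1,2s\}$; the restricted blue is still an antichain, so by induction ($2s-2\ge 2(s-1)$) red contains a $\B_{s-1}=\{D_I\}_{I\subseteq[s-1]}$ based at disjoint $X_0,\dots,X_{s-1}\subseteq[2s-2]$. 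I then try to extend this $\B_{s-1}$ by taking $X_s\in\{\{2s-1\},\{2s\},\{2s-1,2s\}\}$; if all three extensions fail, the three blockers $B_j=D_{I_j}\cup X_s^{(j)}$ must be pairwise incomparable, forcing $I_1,I_2\not\subseteq I_3$ in $2^{[s-1]}$---a restriction I would exploit by varying the deleted pair or re-choosing the internal $\B_{s-1}$.

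The main obstacle is the extension step for $s\ge 3$: since $\alpha_s(2s)>2s$, Theorem~\ref{turan} alone does not deliver a red $\B_s$, and the full antichain hypothesis (not merely LYM) must be extracted through the induction. Navigating the incomparability constraints on $(I_1,I_2,I_3)$ across the $\binom{2s}{2}$ possible deleted pairs---likely via a double-counting argument combining $h_{2s}(\mathrm{blue})\le 1$ with these antichain restrictions on the blocker indices---is where the hardest work of the proof should lie.
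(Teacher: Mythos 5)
Your lower bound is correct and is essentially the paper's own argument: you block with the single level of size $s-1$ rather than size $s$, but the verification via Lemma~\ref{l:sums} is identical (the paper hits the target $s-|X_0|$, you hit $s-1-|X_0|$; both lie in the admissible range $\{0,\dots,s\}$ of the lemma). The $s=1,2$ base cases of your upper bound are also fine.

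The upper bound for $s\ge 3$, however, has a genuine gap, and you acknowledge it yourself: the extension step is never carried out. After restricting to $2^{[2s-2]}$ and finding a red $\B_{s-1}$ with members $D_I$, you must adjoin a generator $X_s\subseteq\{2s-1,2s\}$, and each of the three candidates can be blocked by a blue set of the form $D_I\cup X_s$. The pairwise-incomparability constraints you extract from the antichain hypothesis do not by themselves yield a contradiction --- three blue blockers $D_{I_1}\cup\{2s-1\}$, $D_{I_2}\cup\{2s\}$, $D_{I_3}\cup\{2s-1,2s\}$ satisfying the index restrictions are easy to arrange, and ``varying the deleted pair or re-choosing the internal $\B_{s-1}$'' is a plan, not a proof. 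The paper's route is far more direct and needs no induction: if $A$ is blue, every other set comparable to $A$ is red (else a blue $\B_1$), so if $|A|\ne s$ then the interval $[A,[2s]]$ or $[\emptyset,A]$ has dimension at least $s+1$ and contains a copy of $\B_s$ avoiding $A$ itself, which is therefore all red. Hence every blue set lies on level $s$. One then exhibits a copy of $\B_s$ that dodges level $s$ entirely, generated by $X_0=\emptyset$, $X_j=\{j\}$ for $1\le j\le s-1$, and $X_s=\{s,\dots,2s\}$: its members have sizes in $\{0,\dots,s-1\}\cup\{s+1,\dots,2s\}$, so it is monochromatic red, a contradiction. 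I recommend replacing your inductive extension scheme with this two-step localization argument.
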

{\bf Proof:} First we show $R(\B_s,\B_1)\leq 2s$.
Let $n = 2s$, let $c$ be a red-blue coloring of $\tsupn$, and suppose for a contradiction that $c$ contains neither a red copy of $\B_s$ nor a blue copy of $\B_1$.  We claim that every blue set has size $s$.  If $A$ is blue, then all points in the up-set of $A$ and all points in the down-set of $A$ are red, or else the coloring has a blue copy of $\B_1$.  If $|A| < s$, then the up-set of $A$ contains red copies of $\B_s$, and if $|A| > s$, then the down-set of $A$ contains red copies of $\B_s$.  Therefore $|A| = s$ as claimed.  Consider the copy of $\B_s$ generated via setting $X_0 = \emptyset$, $X_j = \{j\}$ for $1\le j \le s-1$, and $X_s = \{s,s+1,\ldots,2s\}$.  None of the sets in this copy of $\B_s$ have size $s$, and therefore this is a red copy of $\B_s$, a contradiction.

Now we show that $R(\B_s,\B_1) > 2s-1$. Let $n=2s-1$.  We construct a $2$-coloring of $\tsupn$ that contains no red copy of $\B_s$ and no blue copy of $\B_1$ as follows.
Color all sets of size $s$ blue and all other sets red.  The blue sets form an antichain, so the coloring avoids blue copies of $\B_1$.  It suffices to show that there is no red copy of $\B_s$.  Suppose for a contradiction that a red copy of $\B_s$ is generated by sets $X_0,X_1,\ldots,X_s$, and let $x_j = |X_j|$.  Since $X_1,\ldots,X_s$ are disjoint in $[2s-1]$ and non-empty, it follows that $x_1,\ldots,x_s$ is a list of positive integers with sum at most $2s-1$.  Since $0\le x_0 \le s-1$, we apply Lemma~\ref{l:sums} to obtain $I\subseteq [s]$ such that $\sum_{i\in I} x_i = s - x_0$.  It follows that $X_0 \cup \bigcup_{i\in I} X_i$ has size $s$, contradicting that the coloring contains a red copy of $\B_s$.
\hfill $\square$

In 1950, Erd\H{o}s and Rado~\cite{ERCanon} proved the Canonical Ramsey Theorem, which lists structures that arise in every edge-coloring of the complete graph on countably many vertices.  It states that each edge-coloring of the complete graph on the natural numbers contains an infinite subgraph $H$ such that either all edges in $H$ have the same color, or the edges in $H$ have distinct colors, or the edges in $H$ are colored lexicographically by their minimum or maximum endpoint.  By a standard compactness argument, the Canonical Ramsey Theorem implies a finite version, stating that for each $r$, there is a sufficiently large $n$ such that every edge-coloring of the complete graph on vertex set $[n]$ contains a subgraph on $r$ vertices that is colored as in the infinitary version.  

\newcommand{\CR}{\mathrm{CR}}
An analogous result holds for colorings of $\tsupn$.  A coloring of $\tsupn$ is \emph{rainbow} if all sets receive distinct colors.  Let $\CR(r,s)$ be the minimum $n$ such that every coloring of $\tsupn$ contains a rainbow copy of $\B_r$ or a monochromatic copy of $\B_s$.  Although it is not immediately obvious that $\CR(r,s)$ is finite, our next theorem provides an upper bound.

\begin{theorem} \label{canramsey}
$\CR(r,s)\le r 2^{(2r+1)2^{s-1} - 2}$ for positive $r$ and $s$.
\end{theorem}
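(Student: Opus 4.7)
We proceed by induction on $r$, using Theorem~\ref{turan} as the essential density tool at each step. For the base case $r=1$, a rainbow $\B_1$ is simply a pair of comparable sets with distinct colors; if none exists, then $c(A)=c(B)$ whenever $A\subsetneq B$, and transitivity along covering relations forces $c$ to be constant on $\tsupn$. Then a monochromatic $\B_s$ appears as soon as $n\ge s$, which is well within $2^{3\cdot 2^{s-1}-2}$.

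For the inductive step with $r\ge 2$, set $n=r\cdot 2^{(2r+1)2^{s-1}-2}$ and $m=\CR(r-1,s)\le(r-1)\cdot 2^{(2r-1)2^{s-1}-2}$. Given a coloring $c$ of $\tsupn$ with no monochromatic $\B_s$, we apply the induction hypothesis to the restriction of $c$ to $2^{[m]}$ (viewed inside $\tsupn$) and obtain a rainbow $\B_{r-1}$ lying entirely inside $[m]$, generated by disjoint sets $X_0,X_1,\ldots,X_{r-1}\subseteq [m]$. Crucially, the complementary universe $U=[n]\setminus [m]$ is disjoint from every $X_i$ and has size $|U|=n-m\ge 2^{(2r+1)2^{s-1}-2}$. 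We try to extend the rainbow copy to a $\B_r$ by choosing a non-empty $Y\subseteq U$ to serve as $X_r$: this succeeds precisely when each of the $2^{r-1}$ ``new'' sets $A\cup Y$ (with $A$ an old rainbow set) receives a color distinct from the $2^{r-1}$ old colors and from every other new set. The obstructions are of two kinds: \emph{old--new}, $c(A\cup Y)=c(A')$ for old sets $A,A'$; and \emph{new--new}, $c(A_1\cup Y)=c(A_2\cup Y)$ for distinct old $A_1,A_2$.

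To count bad $Y$ we sample $Y$ as a uniformly random element of a random full chain in $2^U$. For each old--new obstruction, the set of bad $Y$ corresponds via the natural isomorphism $[A,A\cup U]\cong 2^U$ to the family $\F_{c(A')}\cap [A,A\cup U]$, which is a $\B_s$-free subfamily of $2^U$. By Theorem~\ref{turan} its Lubell weight is at most $\alpha_s(|U|)\le 2(|U|+1)^{1-2^{1-s}}$, so the random chain hits it with probability at most $2(|U|+1)^{-2^{1-s}}$. A similar but subtler estimate handles new--new obstructions after restricting the picture to the Boolean sublattice spanned by $U$ above the base $A_1\cap A_2$ and applying Theorem~\ref{turan} inside that lattice. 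Summed over $O(4^r)$ pairs of positions in the prospective $\B_r$, the total failure probability is $O(4^r)(|U|+1)^{-2^{1-s}}<1$ given our lower bound on $|U|$, so a good $Y$ exists and the induction closes.

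The main obstacle is the new--new estimate: unlike the old--new case, the event $c(A_1\cup Y)=c(A_2\cup Y)$ couples two parallel translates of $Y$ living in distinct intervals of $\tsupn$, so it does not reduce to a single color-class/interval intersection. The cleanest remedy is to work in the quotient Boolean lattice modulo $A_1\cap A_2$ and argue that a coincidence repeated for too many $Y$'s would embed a $\B_s$ in one of the color classes, contradicting Theorem~\ref{turan}. Tracking constants through this step produces the exponent $(2r+1)2^{s-1}-2$: the $(2r-1)2^{s-1}$ portion is inherited from the induction hypothesis, the extra $2\cdot 2^{s-1}=2^s$ pays for the single extension, and the factor $r$ in front of $2^{(2r+1)2^{s-1}-2}$ absorbs the ambient universe $[m]$ used to host the previously-built rainbow $\B_{r-1}$.
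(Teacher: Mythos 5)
Your base case and the old--new estimate are fine in spirit, but the new--new case is not a technicality you can defer: it is exactly where your inductive strategy breaks, and the remedy you sketch does not work. The event $c(A_1\cup Y)=c(A_2\cup Y)$ involves two sets whose colors \emph{both} vary with the same random piece $Y$, so it is not the event that a random point lands in one fixed color class, and Theorem~\ref{turan} gives no handle on it. Concretely, fix incomparable $A_1,A_2\subseteq[m]$ and consider the coloring that assigns to each $S\subseteq[n]$ the pair $(\phi(S\cap[m]),\,S\setminus[m])$, where $\phi$ is the identity map except that it identifies $A_1$ with $A_2$. Every color class has at most two elements and those two are incomparable, so there is no monochromatic $\B_s$ (indeed no monochromatic $\B_1$), and no color class has large Lubell mass in any interval or quotient lattice; yet $c(A_1\cup Y)=c(A_2\cup Y)$ for \emph{every} $Y\subseteq U$. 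If the rainbow $\B_{r-1}$ handed to you by the induction hypothesis contains both $A_1$ and $A_2$, then no choice of $Y$ extends it, so your claimed bound ``total failure probability $O(4^r)(|U|+1)^{-2^{1-s}}<1$'' is false for new--new events, and ``work in the quotient modulo $A_1\cap A_2$'' cannot rescue it, since in this example all color classes are tiny in every quotient.

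The paper avoids the problem by not fixing any part of the cube in advance: it partitions $[n]$ into $r$ blocks $U_1,\dots,U_r$ of size $t=2^{(2r+1)2^{s-1}-2}$ and chooses all $r$ generators $X_i\subseteq U_i$ independently, each according to the Lubell distribution (a $k$-set of $U_i$ is chosen with probability $\frac{1}{t+1}\binom{t}{k}^{-1}$). For any two distinct index sets $I\neq J$ there is some $m$ in the symmetric difference, say $m\in I\setminus J$; conditioning on all $X_j$ with $j\neq m$ makes $\bigcup_{j\in J}X_j$, and hence its color $c$, deterministic, while $\bigcup_{i\in I}X_i$ still varies with $X_m$. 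The collision event then reduces to ``$X_m$ lands in the $\B_s$-free family of color-$c$ elements of the lattice on $U_m$,'' which Theorem~\ref{turan} bounds by $(4t)^{1-2^{1-s}}/(t+1)$, and a union bound over the $\binom{2^r}{2}$ pairs finishes the proof. This simultaneous randomization of all $r$ coordinates, which guarantees that every pair of vertices of the prospective cube can be decoupled by conditioning, is the essential idea your proposal is missing; an induction on $r$ that freezes $r-1$ generators cannot achieve it.
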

\begin{proof}
Set $t = 2^{(2r+1)2^{s-1} - 2}$ and $n=tr$, and consider a coloring of $\tsupn$ that does not contain a monochromatic copy of $\B_s$.  We obtain a rainbow copy of $\B_r$ with the probabilistic method.  Partition the ground set $[n]$ into $r$ sets $U_1, \ldots, U_r$ each of size $t$.  Independently for each $i$ in $[r]$, choose a subset $X_i$ from $U_i$ so that sets are chosen proportionally to their Lubell mass in the Boolean algebra on $U_i$.  That is, each $k$-set in $U_i$ has probability $\frac{1}{t+1} \binom{t}{k}^{-1}$ of being selected for $X_i$.  For each pair $\{I,J\}$ with $I, J\subseteq[r]$, let $A_{I,J}$ be the event that both $\bigcup_{i\in I} X_i$ and $\bigcup_{j\in J} X_j$ receive the same color.  

We obtain an upper bound on the probability that $A_{I,J}$ occurs.  Since $I$ and $J$ are distinct sets, we may assume without loss of generality that there exists $m\in I-J$.  Fix the selection of all sets $X_1,\ldots,X_r$ except $X_m$.  This determines the color $c$ of $\bigcup_{j\in J} X_j$, and the probability that $A_{I,J}$ occurs is at most the probability that $\bigcup_{i\in I} X_i$ has color $c$.  Let $L$ be the $t$-dimensional Boolean sublattice with ground set $U_m$, and color $B\in L$ with the same color as $B\cup \bigcup_{i\in I-\{m\}} X_i$.  Let $\F$ be the elements in $L$ with color $c$.  Since $\F$ does not contain a monochromatic copy of $\B_s$, Theorem~\ref{turan} implies that $h_t(\F) \le (4t)^{1-2^{1-s}}$.  Since 
\begin{align*}
h_t(\F) &= \sum_{B\in \F} \binom{t}{|B|}^{-1} \\
	&= (t+1) \sum_{B\in \F} \Pr[X_m = B] \\
	&\ge (t+1) \Pr[A_{I,J}],
\end{align*}
we have that $\Pr[A_{I,J}] \le (4t)^{1-2^{1-s}}/(t+1) < 4/(4t)^{2^{1-s}}$.  Using the union bound, we have that the probability that at least one of the events $A_{I,J}$ occurs is less than $\binom{2^r}{2}\cdot 4/(4t)^{2^{1-s}}$, which is at most $1$.  It follows that for some selection of the sets $X_1,\ldots X_r$, none of the events $A_{I,J}$ occur.  These sets generate a rainbow copy of $\B_r$.
\end{proof}

Note that Equation~\eqref{eq:rdn} implies that if $k > n^{\frac{s}{2^s - 1}(1+o(1))}$, then there is a $k$-coloring of $\tsupn$ that does not contain a monochromatic copy of $\B_s$.  Of course, with $k=2^r-1$, there is also no rainbow copy of $\B_r$.  It follows that $2^r-1 > n^{\frac{s}{2^s - 1}(1+o(1))}$ implies that $\CR(r,s)> n$, and hence $\CR(r,s) \ge 2^{\frac{r(2^s-1)}{s}(1-o(1))}$ where the $o(1)$ term tends to $0$ as $r$ increases.

\section{Appendix}
{\bf Proof of inequality \eqref{eq:trivial}:}
We first prove the lower bound by induction on $d$.
For $d=2$, we have $$(2n)^{1-2^{1-d}}=\sqrt{2n}\leq \frac{1}{2}+\sqrt{2n+\frac{1}{4}}
= \alpha_d(n).$$

Assume that $(2n)^{1-2^{1-d}}\leq \alpha_d(n)$ holds for $d$.
For $d+1$, we have
$$n \alpha_d(n) ={\alpha_{d+1}(n) \choose 2}< \frac{\alpha_{d+1}(n)^2}{2}.$$
Thus
$$\alpha_{d+1}(n)> \sqrt{2n \alpha_d(n)}\geq 
\sqrt{2n (2n)^{1-2^{1-d}}}= (2n)^{1-2^{-d}}.$$
The proof of induction is finished.

Now we prove the upper bound by induction on $d$.
For $d=2$, we have 
$$\alpha_d(n)=\frac{1}{2}+\sqrt{2n+\frac{1}{4}} \leq \sqrt{4n}= (4n)^{1-2^{1-d}}.$$
Assume that $\alpha_d(n)\leq (4n)^{1-2^{1-d}}$ holds for $d$.
Since $\alpha_{d+1}(n)\geq (2n)^{1-2^{-d}}\geq 2$,
for $d+1$, we have
$$n \alpha_d(n) ={\alpha_{d+1}(n) \choose 2}> \frac{\alpha_{d+1}(n)^2}{4}.$$
Thus
$$\alpha_{d+1}(n)< \sqrt{4n \alpha_d(n)}\leq
\sqrt{4n (4n)^{1-2^{1-d}}}= (4n)^{1-2^{-d}}.$$
The proof of induction is finished.
\hfill $\square$


\begin{thebibliography}{99}

\bibitem{Brown} Brown, Jr., J. L.,
Note on complete sequences of integers,
{\it Amer. Math. Monthly} {\bf 68} (1961), 557-560.

\bibitem{EK} P.~Erd\H{o}s and D.~Kleitman,
On collections of subsets containing no 4-member Boolean algebra, 
{\it Proc. Amer. Math. Soc.} {\bf 28} (1971), 87-90.

\bibitem{ERCanon} P.~Erd\H{o}s and R.~Rado, 
A combinatorial theorem, 
{\it J. London Math. Soc.} {\bf 25} (1950), 249–255.

\bibitem{GR} D.~S.~Gunderson and V.~R\"odl, 
Extremal problems for affine cubes of integers, 
{\it Combinatorics, Probablity and Computing}, {\bf 7}, (1998), 65-79. 

\bibitem{GRS} D.~S.~Gunderson, V.~R\"odl, and A.~Sidorenko,
Extremal problems for sets forming Boolean algebras and complete partite hypergraphs, {\em Journal of Combinatorial Theory, Series A},
{\bf 88}, Issue 2, (1999), 342–367.

\bibitem{diamondfree} Jerry Griggs, Wei-Tian Li, Linyuan Lu,  Diamond-free Families,
{\em Journal of Combinatorial Theory Ser. A}, {\bf 119} (2012) 310-322.


\bibitem{LCP} L.~Lov\'asz,
{\it Combinatorial problems and exercises}, North-Holland Publishing Company, New York, 1979.

\bibitem{SCL} E.~Szemer\'edi,
On sets of integers containing no four elements in arithmetic progression, {\it Acta Math. Acad. Sci. Hungar.} {\bf 20} (1969), 199-245.
\end{thebibliography}
\end{document}